\newtheorem{theorem}{Theorem}
\DeclareMathOperator{\diam}{diam}
\title{Prevalent uniqueness in ergodic optimisation}
\author{Ian D. Morris}
\thanks{
The author thanks Jairo Bochi for numerous helpful conversations and discussions. This research was partially supported by Leverhulme Trust Research Project Grant RPG-2016-194 and by FONDECYT 1180371.}
\address{Mathematics Department, University of Surrey, Guildford GU2 7XH, United Kingdom.}
\email{i.morris@surrey.ac.uk}
\begin{document}
\begin{abstract}
One of the fundamental results of ergodic optimisation asserts that for any dynamical system on a compact metric space $X$ and for any Banach space of continuous real-valued functions on $X$ which embeds densely in $C(X)$ there is a residual set of functions in that Banach space for which the maximising measure is unique. We extend this result by showing that this residual set is additionally prevalent, answering a question of J. Bochi and Y. Zhang.
\end{abstract}
\maketitle

In ergodic optimisation one considers a dynamical system on a compact metric space $X$ and asks, for a given function $f \colon X \to \mathbb{R}$, which invariant measures of the dynamical system give the largest ergodic average to $f$. We will call such measures \emph{maximising measures} for $f$ throughout this article. In this article we will assume that the dynamical system on $X$ is in discrete time and is generated by a continuous mapping $T \colon X\to X$, but the ideas of this article may be straightforwardly adapted to broader contexts such as flows and the actions of amenable groups. 

The existence of at least one maximising measure for a given continuous function $f\colon X \to \mathbb{R}$ is guaranteed by elementary compactness considerations. Given any nonempty compact $T$-invariant set $K \subset X$ it is rather trivial to construct a continuous potential $f \colon X \to \mathbb{R}$ whose maximising measures are precisely the invariant measures supported on $K$, simply by defining $f(x):=-\inf_{y \in K}d(x,y)$. In the case where $X$ is a manifold it is trivial to modify this construction so as to give $f$ the same property whilst also ensuring that $f$ has $C^k$ or smooth regularity. Using more delicate and abstract arguments it is also possible to construct, for any given ergodic invariant measure $\mu$ on $X$, a continuous function $f \colon X \to \mathbb{R}$ whose unique maximising measure is precisely $\mu$, as was undertaken in \cite{Je06b}; but there are, in general, nontrivial obstructions to constructing such a function $f$ in higher regularity classes such as the class of H\"older continuous functions, since the allowed set of maximising measures for such functions is in general constrained: see \cite[\S5]{Je19} for a broad discussion of this point, as well as such articles as \cite{Bo01,BoJe02,Mo07} for various theorems of the same general character. For the above reasons the study of maximising measures has therefore focused on understanding the maximising measures of \emph{typical} elements of some space of functions $X \to \mathbb{R}$, which is in almost all cases a Banach space of continuous functions. Since ergodic optimisation is entirely trivial when $T$ admits only a single invariant measure, and is only slightly less trivial when the number of invariant measures is finite, research has furthermore tended to focus on systems in which the spectrum of invariant measures is extremely large: with only very few exceptions, researchers in ergodic optimisation have always assumed the dynamical system on $X$ to be either an expanding or hyperbolic transformation of a manifold, a subshift of finite type,  or a hyperbolic flow.  In practice the study of typical H\"older continuous, Lipschitz continuous or $C^1$ functions $f\colon X \to \mathbb{R}$ has been found most fruitful.

In order to study general questions of this kind -- ``Given a hyperbolic dynamical system $T\colon X \to X$ and a Banach space $\mathfrak{X}$ of real-valued continuous functions $X \to \mathbb{R}$, what can be said about the maximising measures of typical elements of $\mathfrak{X}$?'' -- one must of course be equipped with an understanding of what is meant by ``typical''. From the early work of G. Yuan and B.R. Hunt (in \cite{YuHu99}) onwards research has focused on the standard \emph{topological} notion of typicality, namely that of a \emph{residual set}. We recall that if $\mathfrak{X}$ is a complete metrisable space then a subset of $\mathfrak{X}$ is called a \emph{$G_\delta$ set} if it is equal to the intersection of a countable family of open sets, and is called \emph{residual} if it is both $G_\delta$ and dense. By Baire's theorem every residual subset of such a space $\mathfrak{X}$ is dense, and the intersection of countably many residual sets is also residual. If $\mathfrak{X}$ is a Banach space it is easily also seen that the class of residual subsets of $\mathfrak{X}$ is closed with respect to translation and scalar multiplication. Equipped with this definition, research in ergodic optimisation has historically focused on attempting to show that for appropriate dynamical systems on compact metric spaces $X$ and appropriate Banach spaces $\mathfrak{X}$ of continuous functions $X\to \mathbb{R}$ there exists a residual subset of $\mathfrak{X}$ such that for every function in that residual set, the set of maximising measures of the function has some desired property such as being a singleton set, containing only measures of zero entropy, or containing a measure which is supported on a periodic orbit (see for example \cite{Bo01,BoJe02,Co16,CoLoTh01,HuLiMaXuZh19a,HuLiMaXuZh19b,Mo08,Mo10,QuSi12}). In one of the profoundest achievements in ergodic optimisation so far, a long-standing question of Yuan and Hunt -- namely, is it the case that if $T \colon X \to X$ is expanding or hyperbolic, a typical $C^1$ function has a periodic orbit as its sole maximising measure? -- has recently been satisfactorily answered in this specific understanding of the word ``typical'' (see \cite{Co16,HuLiMaXuZh19a}).

There is however a second notion of a set of ``typical'' elements of a Banach space which is rather more probabilistic or measure-theoretic in nature. If $\mathfrak{X}$ is a topological vector space equipped with a complete metric, a subset $\mathcal{P}$ of $\mathfrak{X}$ is called \emph{prevalent} if there exists a compactly supported Borel probability measure $m$ on $\mathfrak{X}$ such that for every $x \in \mathfrak{X}$ the translated set $x+\mathcal{P}$ has full measure with respect to $m$. As is the case for residual sets, the class of prevalent sets is closed with respect to translation and scalar multiplication, every prevalent set is dense, and every countable intersection of prevalent sets is prevalent; proofs of these statements may be found in \cite{HuSaYo92}. If $\mathfrak{X}$ is finite-dimensional then its prevalent subsets are precisely its subsets with full Lebesgue measure. As such the notions of prevalent and residual are orthogonal: a subset of $\mathfrak{X}$ may be residual but not prevalent, or prevalent without being residual. This makes it natural to ask the following question: for expanding or hyperbolic dynamical systems $T \colon X \to X$ and $C^1$ functions $X \to \mathbb{R}$, is it the case that a \emph{prevalent} set of functions has a periodic orbit as its sole maximising measure? We remark that measure-theoretic notions of typicality were emphasised in the very earliest works on ergodic optimisation (see \cite{HuOt96a,HuOt96b}) and the literature on the subject steered in the direction of topological notions of typicality only subsequently.

At the time of writing, the entire body of literature on the ergodic optimisation of prevalent sets of functions consists of the single article \cite{BoZh16}, in which a space of functions over shifts of finite type is created for the express purpose of proving that it admits a prevalent set of functions whose maximising measures are all supported on periodic orbits. The purpose of this note is to initiate the study of prevalent properties of maximising measures on more general function spaces. In support of this goal we prove a prevalent version of one of the fundamental nontrivial results in ergodic optimisation: generic uniqueness of the maximising measure. We hope that this will serve to stimulate research in ergodic optimisation in the direction of understanding the maximising measures of prevalent sets of functions, with the ultimate goal of extending the results of works such as \cite{Co16,HuLiMaXuZh19a,HuLiMaXuZh19b,Mo08} to the prevalent context.

In order to state and prove our result we require some additional definitions and notation. If $T \colon X \to X$ is a continuous transformation of a compact metric space, let $\mathcal{M}_T$ denote the set of all Borel probability measures on $X$ which are $T$-invariant. Via the Riesz representation theorem we may identify $\mathcal{M}_T$ with a subset of the dual space $C(X)^*$ in the weak-* topology. We recall that the weak-* topology on $\mathcal{M}_T$ is characterised as the smallest topology such that the functional $\mu \mapsto \int f\,d\mu$ is continuous for every $f \in C(X)$. In this topology $\mathcal{M}_T$ is compact and metrisable; it is also nonempty. For each $f \in C(X)$ we define $\beta(f):=\sup_{\mu \in \mathcal{M}_T} \int f\,d\mu$ and let $\mathcal{M}_{\max}(f)$ denote the set of all $\mu \in \mathcal{M}_T$ such that $\int f\,d\mu=\beta(f)$. By compactness and continuity $\mathcal{M}_{\max}(f)$ is nonempty for every $f \in C(X)$. 

Early in the development of ergodic optimisation it was noted that for many natural Banach spaces $\mathfrak{X}$ consisting of continuous functions $X \to \mathbb{R}$, there is a residual set of $f$ for which $\mathcal{M}_{\max}(f)$ consists of a single measure (see for example \cite{Bo01,CoLoTh01}). A very general expression of this fact is given by the following theorem of O. Jenkinson, proved in \cite{Je06a}, which is itself a development of an earlier result of Contreras, Lopes and Thieullen in \cite{CoLoTh01}: if $T \colon X \to X$ is a continuous transformation of a compact metric space, and $\mathfrak{X}$ a real topological vector space of continuous functions which embeds densely and continuously in $C(X)$, then $\mathfrak{X}$ contains a residual set of functions $f$ such that $\mathcal{M}_{\max}(f)$ is a singleton. It was asked by J. Bochi and Y. Zhang in \cite[\S6]{BoZh16} whether the uniqueness of the maximising measure also holds on a prevalent set of functions at a similar level of generality. The result which we prove in this note answers their question affirmatively:
\begin{theorem}\label{th:one}
Let $T\colon X \to X$ be a continuous transformation of a compact metric space. Let $\mathfrak{X}$ be a real vector space of continuous functions $X \to \mathbb{R}$ equipped with a complete metric with respect to which translation and scalar multiplication are both continuous, and such that $\mathfrak{X}$ is dense as a subset of $C(X)$. Then the set
\begin{equation}\label{eq:he_chomnk}
\left\{f \in \mathfrak{X} \colon \mathcal{M}_{\max}(f)\text{ is a singleton}\right\}
\end{equation}
is prevalent.
\end{theorem}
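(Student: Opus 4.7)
The plan is to exhibit a single compactly supported Borel probability measure $m$ on $\mathfrak{X}$ such that, for every $f_0\in\mathfrak{X}$, the translate by $f_0$ of the set \eqref{eq:he_chomnk} has $m$-measure one. By the Birkhoff--Kakutani theorem we may assume, without changing the topology, that the complete metric $d$ on $\mathfrak{X}$ is translation-invariant and balanced: $d(tx,0) \le d(x,0)$ whenever $|t| \le 1$. Since $X$ is compact and metric, $C(X)$ is separable; combined with the density of $\mathfrak{X}$ in $C(X)$ we may pick a sequence $(g_n)_{n \ge 1}$ in $\mathfrak{X}$ that is dense in $C(X)$ in the sup norm. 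Using continuity of scalar multiplication in $\mathfrak{X}$, select $s_n > 0$ so small that both $d(s_n g_n, 0) < 2^{-n}$ and $\|s_n g_n\|_\infty < 2^{-n}$. For every $\mathbf{t} = (t_n)_{n \ge 1} \in K := [-1,1]^{\mathbb{N}}$, balancedness of $d$ and the two summability conditions make the partial sums of $\sum_n t_n s_n g_n$ Cauchy both in $(\mathfrak{X},d)$ and uniformly on $X$, and we take $\Phi(\mathbf{t}) \in \mathfrak{X}$ to be the common limit (an element of $\mathfrak{X}\subseteq C(X)$). The resulting map $\Phi \colon K \to \mathfrak{X}$ is continuous into both $\mathfrak{X}$ and $C(X)$ and has compact image; let $m := \Phi_*\nu$, where $\nu$ is the product of normalised Lebesgue measures on $[-1,1]$.

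A short weak-$*$ compactness argument yields, for every $f,g \in C(X)$,
\[
\lim_{t \to 0^+}\frac{\beta(f+tg)-\beta(f)}{t} \;=\; \max_{\mu\in\mathcal{M}_{\max}(f)} \int g\,d\mu,
\]
with the analogous formula with $\min$ in place of $\max$ for the left derivative. Thus $\beta$ is Gateaux-differentiable at $f$ in direction $g$ exactly when $\mu \mapsto \int g\,d\mu$ is constant on $\mathcal{M}_{\max}(f)$; since $C(X)$ separates Borel probability measures and $|\int g\,d\mu-\int g_n\,d\mu| \le \|g-g_n\|_\infty$ holds uniformly in $\mu$, differentiability of $\beta$ at $f$ in each of the countably many directions $g_n$ is equivalent to $\mathcal{M}_{\max}(f)$ being a singleton.

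The core of the proof is then a Fubini application. Fix $f_0 \in \mathfrak{X}$ and for each $n \ge 1$ set
\[
B_n := \bigl\{\mathbf{t} \in K \colon \beta \text{ is not differentiable at } f_0+\Phi(\mathbf{t}) \text{ in direction } g_n\bigr\}.
\]
Holding $(t_k)_{k\ne n}$ fixed and writing $u := f_0 + \sum_{k\ne n} t_k s_k g_k \in \mathfrak{X}$, the function $t_n \mapsto \beta(u + t_n s_n g_n)$ is continuous and convex on $[-1,1]$ since $\beta$ is convex and $1$-Lipschitz on $C(X)$; it is therefore differentiable off an at most countable subset of $[-1,1]$, so the slice of $B_n$ at $(t_k)_{k\ne n}$ has one-dimensional Lebesgue measure zero. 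The set $B_n$ is itself Borel in $K$, as the relevant one-sided directional derivatives of $\beta$ at $f_0+\Phi(\mathbf{t})$ in direction $g_n$ are monotone limits of continuous functions of $\mathbf{t}$. Fubini gives $\nu(B_n)=0$, hence $\nu\bigl(\bigcup_n B_n\bigr)=0$; for every $\mathbf{t}$ outside this union, $\mathcal{M}_{\max}(f_0+\Phi(\mathbf{t}))$ is a singleton by the previous paragraph. Pushing forward by $\Phi$ shows that the translate of \eqref{eq:he_chomnk} by $f_0$ has full $m$-measure, and as $f_0 \in \mathfrak{X}$ was arbitrary the set is prevalent.

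The main technical obstacle is the very first step: $\mathfrak{X}$ is only assumed to be a completely metrizable topological vector space, so we have neither a norm nor local convexity at our disposal, and we must coordinate convergence in the abstract metric topology with uniform convergence on $X$ so that $\beta$---a convex Lipschitz function naturally defined on $C(X)$---can be composed with the probe $\Phi$ and differentiated slice-wise. The balanced translation-invariant metric supplied by Birkhoff--Kakutani, together with the freedom to rescale each $g_n$ independently so as to make $d(s_n g_n,0)$ and $\|s_n g_n\|_\infty$ simultaneously smaller than $2^{-n}$, is precisely what allows both kinds of convergence at once, and hence the entire argument.
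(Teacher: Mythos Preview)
Your proof is correct and rests on the same analytic core as the paper's: both identify the one-sided derivatives of $t\mapsto\beta(f+tg)$ with $\max/\min_{\mu\in\mathcal{M}_{\max}(f)}\int g\,d\mu$, so that uniqueness of the maximising measure at $f$ is equivalent to Gateaux differentiability of $\beta$ at $f$ in every direction from a countable $C(X)$-dense family $(g_n)$, and both then exploit almost-everywhere differentiability of one-variable convex functions.

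The packaging differs, however. The paper treats one direction $g_n$ at a time, taking one-dimensional Lebesgue measure on the segment $\{tg_n:t\in[0,1]\}$ as a probe to show that each $\mathcal{P}_n=\{f:\int g_n\,d\mu\text{ is constant on }\mathcal{M}_{\max}(f)\}$ is prevalent, and then appeals to the stability of prevalence under countable intersection. You instead build a single probe $m=\Phi_*\nu$ supported on a Hilbert-cube image in $\mathfrak{X}$ and apply Fubini on the product measure $\nu$ to control all directions simultaneously. Your route yields an explicit single transverse measure and is self-contained in that it does not quote the countable-intersection lemma for prevalence (whose proof is itself a Fubini-type construction); the paper's route keeps each probe one-dimensional at the price of that black-box appeal. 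You are also more explicit than the paper about a point the hypotheses leave open---continuity of the inclusion $\mathfrak{X}\hookrightarrow C(X)$ is not assumed---by arranging summability of $\sum s_n g_n$ in both topologies; strictly speaking, your ``common limit'' claim and the paper's implicit measurability of the sets $\mathcal{P}_n$ both lean on this continuity when passing between the $\mathfrak{X}$-topology (where the probe lives) and $C(X)$ (where $\beta$ is Lipschitz).
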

\begin{proof}[Proof of Theorem \ref{th:one}]
The fundamental idea of the proof is to show that for every $g \in \mathfrak{X}$, the set
\begin{equation}\label{eq:heftychonk}
\left\{f \in \mathfrak{X} \colon \left\{\int g\,d\mu \colon \mu \in \mathcal{M}_{\max}(f)\right\}\text{ is a singleton}\right\}\end{equation}
is prevalent. Equipped with this information it will follow that if $(g_n)_{n=1}^\infty$ is a sequence of elements of $\mathfrak{X}$ which is dense in $C(X)$, then the set
\[\bigcap_{n=1}^\infty \left\{f \in \mathfrak{X} \colon \left\{\int g_n\,d\mu \colon \mu \in \mathcal{M}_{\max}(f)\right\}\text{ is a singleton}\right\}\]
is also prevalent; but by density this is precisely
\[\left\{f \in \mathfrak{X} \colon \left\{\int g\,d\mu \colon \mu \in \mathcal{M}_{\max}(f)\right\}\text{ is a singleton for all }g \in C(X)\right\}\]
and this in turn is equal to the set defined in \eqref{eq:he_chomnk}. To establish the prevalence of the set \eqref{eq:heftychonk}
for fixed $g \in \mathfrak{X}$ we will show that for every $f \in \mathfrak{X}$, the intersection of the above set with the line $\{f+tg \colon t \in \mathbb{R}\}$ is precisely the set of points of differentiability of the function $t \mapsto \beta(f+tg)$, and the latter set has full one-dimensional Lebesgue measure. The proof therefore proceeds through a sequence of claims concerning the function $\mathbb{R} \to \mathbb{R}$ defined by $t \mapsto \beta(f+tg)$ where $f,g \in \mathfrak{X}$ are arbitrary. 

We first claim that if $f,g \in \mathfrak{X}$ are specified then $t \mapsto \beta(f+tg)$ is continuous and is differentiable Lebesgue almost everywhere. This may be seen in two ways. On the one hand we by definition have
\[\beta(f+tg)=\sup_{\mu \in \mathcal{M}_T} \int f+tg\,d\mu\]
for all $t \in \mathbb{R}$, and this relation expresses $t \mapsto \beta(f+tg)$ as the pointwise supremum of a family of linear maps $\mathbb{R} \to \mathbb{R}$. Such a function is necessarily convex, and hence is continuous and differentiable Lebesgue a.e. by standard results from convex analysis (see e.g. \cite[Theorem 25.5]{Ro70}). This proves the claim. Alternatively, if $t_1,t_2 \in \mathbb{R}$ are arbitrary then for every $\mu \in \mathcal{M}_T$ we have
\begin{align*}\int f+t_1g\,d\mu &=  \int f + t_2  g\,d\mu +\int (t_1-t_2)g\,d\mu\\
& \leq \int f + t_2  g\,d\mu+ |g|_\infty |t_1-t_2|\leq \beta(f+t_2g)+|g|_\infty|t_1-t_2|\end{align*}
and by taking the supremum with respect to $\mu$ and rearranging we obtain
\[\beta(f+t_1g) -\beta(f+t_2 g) \leq |g|_\infty |t_1-t_2|.\]
By symmetry it follows that
\[|\beta(f+t_1g)-\beta(f+t_2g)| \leq |g|_\infty |t_1-t_2|\]
for all $t_1,t_2 \in \mathbb{R}$ and so $t \mapsto \beta(f+tg)$ is uniformly Lipschitz continuous. The differentiability of the function almost everywhere follows by Rademacher's theorem and this gives an alternative proof of the claim.

Our second claim is that the inequality
\[\sup_{\mu \in \mathcal{M}_{\max}(f)}\int g\,d\mu \leq \frac{\beta(f+\tau g)-\beta(f)}{\tau}\leq \sup_{\mu \in \mathcal{M}_{\max}(f+\tau g)}\int g\,d\mu \]
is valid for all $f,g \in \mathfrak{X}$ and all real $\tau>0$. Indeed, to obtain the first inequality let $\mu \in \mathcal{M}_{\max}(f)$ be arbitrary; we have
\[ \beta(f)+\tau \int g\,d\mu = \int f+\tau g\,d\mu \leq \beta(f+\tau g)\]
and by rearrangement
\[\int g\,d\mu \leq \frac{\beta(f+\tau g)-\beta(f)}{\tau} \]
so that the first claimed inequality follows by taking the supremum with respect to $\mu$. Similarly if $\mu \in \mathcal{M}_{\max}(f+\tau g)$ is arbitrary then
\[\beta(f+\tau g)=\int f\,d\mu + \tau \int g\,d\mu \leq \beta(f)+\tau\int g\,d\mu\]
and a similar rearrangement gives
\[\frac{\beta(f+\tau g)-\beta(f)}{\tau}\leq \int g\,d\mu.\]
We again take the supremum with respect to $\mu$ to complete the proof of the claim.

We thirdly claim that if $f,g \in \mathfrak{X}$ are arbitrary then
\[\limsup_{\tau \searrow 0} \sup_{\mu \in \mathcal{M}_{\max}(f+\tau g)}\int g\,d\mu \leq \sup_{\mu \in \mathcal{M}_{\max}(f)}\int g\,d\mu.\]
Fixing $f$ and $g$, choose a decreasing sequence $(\tau_n)_{n=1}^\infty$ of positive real numbers converging to $0$ such that
\[\lim_{n\to\infty} \sup_{\mu \in \mathcal{M}_{\max}(f+\tau_ng)}\int g\,d\mu=\limsup_{\tau \searrow 0} \sup_{\mu \in \mathcal{M}_{\max}(f+\tau g)}\int g\,d\mu.\]
For each $n \geq 1$ the set $\mathcal{M}_{\max}(f+\tau_n g)$ is a closed (hence compact) nonempty subset of $\mathcal{M}_T$ and so we may choose $\nu_n \in \mathcal{M}_{\max}(f+\tau_ng)$ such that
\[\int g\,d\nu_n = \sup_{\mu \in \mathcal{M}_{\max}(f+\tau_ng)}\int g\,d\mu.\]
Using the compactness and metrisability of $\mathcal{M}_T$ choose a subsequence $(\tau_{n_k})_{k=1}^\infty$ such that $\lim_{k \to \infty} \nu_{n_k} \in \mathcal{M}_T$ exists, and call this limit $\nu$. We have
\[\int f\,d\nu = \lim_{k \to \infty} \int f\,d\nu_{n_k} = \lim_{k \to \infty} \beta(f+\tau_{n_k}g)=\beta(f)\]
(where we have used the continuity result from the first claim) so in particular $\nu \in \mathcal{M}_{\max}(f)$. It follows that 
\begin{align*}\sup_{\mu \in \mathcal{M}_{\max}(f)}\int g\,d\mu \geq \int g\,d\nu &= \lim_{k \to \infty} \int g\,d\nu_{n_k}\\
& =\lim_{k \to \infty} \sup_{\mu \in \mathcal{M}_{\max}(f+\tau_{n_k}g)}\int g\,d\mu\\
&= \limsup_{\tau \searrow 0} \sup_{\mu \in \mathcal{M}_{\max}(f+\tau g)}\int g\,d\mu\end{align*}
as required to prove the claim.

We fourthly claim that if $f,g \in\mathfrak{X}$ and $t \in \mathbb{R}$ are fixed then the derivative 
\[\frac{d}{dt} \beta(f+tg)\Big|_{t=t_0}=\lim_{\tau \to 0} \frac{\beta(f+(t_0+\tau)g)-\beta(f+t_0g)}{\tau}\]
exists if and only if $\{\int g\,d\mu \colon \mu \in \mathcal{M}_{\max}(f+t_0g)\}$ is a singleton set, and if the derivative exists then it is equal to the sole element of that singleton. Clearly by considering $f+t_0g$ in place of $f$ we may without loss of generality reduce to the case $t_0=0$. Applying this reduction and considering the effect of the second and third claims on $f$ and $g$ we find that
\begin{align*}\sup_{\mu \in \mathcal{M}_{\max}(f)}\int g\,d\mu &\leq \liminf_{\tau \searrow 0} \frac{\beta(f+\tau g)-\beta(f)}{\tau}\\
&\leq  \limsup_{\tau \searrow 0} \frac{\beta(f+\tau g)-\beta(f)}{\tau}\\
&\leq \limsup_{\tau \searrow 0}  \sup_{\mu \in \mathcal{M}_{\max}(f+\tau g)}\int g\,d\mu \leq \sup_{\mu \in \mathcal{M}_{\max}(f)}\int g\,d\mu\end{align*}
so that
\[\lim_{\tau \searrow 0}  \frac{\beta(f+\tau g)-\beta(f)}{\tau} = \sup_{\mu \in \mathcal{M}_{\max}(f)} \int g\,d\mu\]
for all $f,g \in \mathfrak{X}$. Considering instead $f$ and $-g$ the same argument yields
\begin{align*}\lim_{\tau \nearrow 0} \frac{\beta(f+\tau g)-\beta(f)}{\tau} &= \lim_{\tau \searrow 0}  \frac{\beta(f-\tau g)-\beta(f)}{-\tau}\\
& = -\sup_{\mu \in \mathcal{M}_{\max}(f)} \int (-g)\,d\mu= \inf_{\mu \in \mathcal{M}_{\max}(f)} \int g\,d\mu.\end{align*}
Hence the derivative of $t \mapsto \beta(f+t g)$ exists at $t=0$ if and only if $\inf_{\mu \in \mathcal{M}_{\max}(f)} \int g\,d\mu$ and $\sup_{\mu \in \mathcal{M}_{\max}(f)} \int g\,d\mu$ are equal, and when the derivative exists it is equal to their common value. The claim follows easily.

We may now prove the theorem. Since $\mathfrak{X}$ is by hypothesis dense in $C(X)$, and since $C(X)$ is separable, we may choose a sequence $(g_n)_{n=1}^\infty$ of elements of $\mathfrak{X}$ which is dense in $C(X)$. For each $n \geq 1$ let $m_n$ be the Borel probability measure on $\mathfrak{X}$ given by defining one-dimensional Lebesgue measure on the interval $\{t g_n \colon t \in [0,1]\}$ in the obvious fashion. Consider the sets
\[\mathcal{P}_n:=\left\{f \in \mathfrak{X} \colon \left\{\int g_n\,d\mu \colon \mu \in \mathcal{M}_{\max}(f)\right\}\text{ is a singleton}\right\}\]
for $n \geq 1$. By the fourth claim, for every $f \in \mathfrak{X}$ the Lebesgue measure of the set
\begin{equation}\label{eq:oh-lawd-he-comin}\left\{t \in [0,1]\colon \left\{\int g_n\,d\mu \colon \mu \in \mathcal{M}_{\max}(f+tg_n)\right\}\text{ is a singleton}\right\}\end{equation}
is equal to the Lebesgue measure of the set of differentiability points of the function $t \mapsto \beta(f+tg_n)$ on $[0,1]$; but by the first claim, that set has full measure. Since the Lebesgue measure of the set \eqref{eq:oh-lawd-he-comin} is precisely $m_n(-f+\mathcal{P}_n)$ we conclude that every translate of $\mathcal{P}_n$ has full $m_n$-measure, which is to say that $\mathcal{P}_n$ is prevalent. 
By countable intersection the set
\[\mathcal{P}:=\left\{f \in \mathfrak{X} \colon \text{for all } n\geq 1, \left\{\int g_n\,d\mu \colon \mu \in \mathcal{M}_{\max}(f)\right\}\text{ is a singleton}\right\}\]
is also prevalent. If $f \in \mathcal{P}$ then by the density of $(g_n)$ in $C(X)$ we deduce that
\[\mathcal{P}=\left\{f \in \mathfrak{X} \colon \text{for all } g \in C(X), \left\{\int g\,d\mu \colon \mu \in \mathcal{M}_{\max}(f)\right\}\text{ is a singleton}\right\}\]
and so the latter set is prevalent. But it is clear that this set is precisely the set of all $f \in \mathfrak{X}$ such that $\mathcal{M}_{\max}(f)$ is a singleton set: if $\mathcal{M}_{\max}(f)$ is a singleton then so trivially is $\{\int g\,d\mu\colon \mu \in \mathcal{M}_{\max}(f)\}$, and if $\mathcal{M}_{\max}(f)$ is not a singleton then it contains two measures which, being distinct, must necessarily assign differing integrals to some function $g \in C(X)$. Hence the set
\[\left\{f \in \mathfrak{X} \colon \mathcal{M}_{\max}(f)\text{ is a singleton}\right\}\]
is equal to the prevalent set $\mathcal{P}$. The proof is complete.
\end{proof}

To close this note we make some remarks on the relationship between the proof of Theorem \ref{th:one} and the proof of residual unique ergodic optimisation given by Jenkinson in \cite{Je06a}. Foundational to both proofs is the observation that for fixed $f \in \mathfrak{X}$ the set $\mathcal{M}_{\max}(f)$ is a singleton if and only if $\{\int g\,d\mu \colon \mu \in \mathcal{M}_{\max}(f)\}$ is a singleton for every $g \in C(X)$, if and only if $\{\int g_n\,d\mu \colon \mu \in \mathcal{M}_{\max}(f)\}$ is a singleton for all $n \geq 1$ where $(g_n)$ is an a priori fixed dense subsequence of $C(X)$. To prove either theorem it is therefore sufficient to be able to show that for every $g \in \mathfrak{X}$ the set
\begin{equation}\label{eq:megachonker}\left\{f \in \mathfrak{X} \colon \left\{\int g\,d\mu \colon \mu \in \mathcal{M}_{\max}(f)\right\}\text{ is a singleton}\right\}\end{equation}
is either residual or prevalent as the situation requires. Jenkinson's proof can be interpreted as being based around the fact that $\{\int g\,d\mu \colon \mu \in \mathcal{M}_{\max}(f+tg)\}$ is a singleton at $t$ if and only if $t$ is a continuity point of the upper semi-continuous functional
\[t \mapsto\diam \left\{\int g\,d\mu \colon \mu\in \mathcal{M}_{\max}(f+tg)\right\},\]
and this principle is used in \cite{Je06a} to establish the denseness of the set \eqref{eq:megachonker}. This procedure may be adapted so as to prove residuality directly: by modifying Jenkinson's argument only slightly one may show that the set \eqref{eq:megachonker} is precisely the set of continuity points of the upper semi-continuous functional 
\[f \mapsto  \diam \left\{\int g\,d\mu \colon \mu\in \mathcal{M}_{\max}(f)\right\}\]
defined on all of $\mathfrak{X}$, and this completes the proof since the set of continuity points of an upper semi-continuous function is always residual. The strategy of Theorem \ref{th:one} differs from this retelling of Jenkinson's argument by considering instead the set of \emph{differentiability points} of the \emph{continuous} functional
\[t \mapsto \beta(f+tg)\]
for $f \in \mathfrak{X}$ and showing that it has full Lebesgue measure for fixed $f$ and $g$. We leave open to speculation the question of whether or not this principle might be adapted so as to establish the prevalence of other qualities of maximising measures by characterising them via the differentiability points of appropriate functionals.

\bibliographystyle{acm}
\bibliography{ergopt-biblio}
\end{document}